\documentclass[12pt]{article}
\RequirePackage[colorlinks,citecolor=blue,urlcolor=blue,linkcolor=blue]{hyperref}
\hypersetup{
colorlinks = true,
citecolor=blue,
urlcolor=blue,
linkcolor=blue,
pdfauthor = {Alexey Kuznetsov},
pdfkeywords = {Hamburger's indeterminate moment problem, Nevanlinna parametrization, entire functions,
 Cauchy Residue Theorem},
pdftitle = {Constructing measures with identical moments},
pdfpagemode = UseNone
}
\usepackage{graphicx,xspace,colortbl}
\usepackage{amsmath,amsthm,amsfonts,amssymb}
\usepackage{color}
\usepackage{enumerate}
\usepackage{fancybox}
\usepackage{epsfig}
\usepackage{subfig}
\usepackage{pdfsync}
    \oddsidemargin -1.0cm
    \evensidemargin -1.0cm
    \topmargin -1.5cm
    \textwidth 18.7cm
    \textheight 23.5cm
    \def\qed{\hfill$\sqcap\kern-8.0pt\hbox{$\sqcup$}$\\}
\DeclareMathOperator{\re}{Re}
\DeclareMathOperator{\im}{Im}
    \def\r{{\mathbb R}}
    \def\c{{\mathbb C}} 
    \def\d{{\textnormal d}}
    \def\i{{\textnormal i}}

    \def\h{{\mathbb{H}}} 
    \newmuskip\pFqskip
\pFqskip=3mu
\mathchardef\pFcomma=\mathcode`,

\newtheorem{theorem}{Theorem}

\newtheorem{proposition}{Proposition}

\theoremstyle{definition}
\newtheorem{definition}{Definition}

\newtheorem{remark}{Remark}


\title{Constructing measures with identical moments}
\author{
{Alexey Kuznetsov
\footnote{Dept. of Mathematics and Statistics,  York University,
4700 Keele Street, Toronto, ON, M3J 1P3, Canada.  \newline
E-mail:  kuznetsov@mathstat.yorku.ca } 
 }}
 
 \date{\today}

\begin{document}
\maketitle

\begin{abstract} 
The Nevanlinna parametrization establishes a bijection between the class of all measures having a prescribed set of moments and the class of Pick functions. The fact that all measures constructed through the Nevanlinna parametrization have identical moments follows from the theory of orthogonal  polynomials and continued fractions. In this paper we explore the opposite direction: we construct a set of measures and we show that they all have identical moments, and then we establish a Nevanlinna-type parametrization for this set of measures. Our construction does not require the theory of orthogonal polynomials and it exposes the analytic structure behind the Nevanlinna parametrization. 
\end{abstract}

{\vskip 0.15cm}
 \noindent {\it Keywords}:  Hamburger's indeterminate moment problem, Nevanlinna parametrization, entire functions,
 Cauchy Residue Theorem \\
 \noindent {\it 2010 Mathematics Subject Classification }: Primary 30E05, Secondary 30E20

\section{Introduction}

Let $\nu(\d x)$ be a positive measure on $\r$, having finite moments 
\begin{equation}\label{moment_problem}
\int_{\r} x^{n} \nu(\d x)=v_n, \;\;\; n=0,1,2,\dots
\end{equation}
The Hamburger's moment problem is the following: given a sequence of moments $\{v_n\}_{n\ge 0}$, find the set of all measures 
$\nu$ on $\r$ satisfying  \eqref{moment_problem}. In the case when there exists a unique such measure, the moment problem is called 
{\it determinate}, otherwise it is called {\it indeterminate} and in this case the set of all solutions to the moment problem
can be described through the Nevanlinna parametrization. 

To present the Nevanlinna parametrization we will follow the review of Berg 
\cite{Berg1995}. The proofs of all these results can be found in the classical books by Akhiezer \cite{Akhiezer}
and Shohat and Tamarkin \cite{Shohat}. 

Denote by $\{P_n\}_{n\ge 0}$ the sequence of orthonormal polynomials with respect to the measure $\nu(\d x)$. We assume that $P_n$ is of degree $n$ with positive leading coefficient. Define the 
sequence of polynomials of the second kind as
$$
Q_n(x):=\int_{\r} \frac{P_n(x)-P_n(y)}{x-y} \nu(\d y), \;\;\; n=0,1,2,\dots 
$$
Next we define the functions $A,B,C$, $D$ as follows
\begin{align*}
\nonumber
A(z)&:=z \sum\limits_{n\ge 0} Q_n(0) Q_n(z),\\
B(z)&:=-1+z \sum\limits_{n\ge 0} Q_n(0) P_n(z),\\
\nonumber
C(z)&:=1+z \sum\limits_{n\ge 0} P_n(0) Q_n(z),\\
\nonumber
D(z)&:=z \sum\limits_{n\ge 0} P_n(0) P_n(z).
\end{align*}
It is known that in the indeterminate case the above series converge uniformly on compact subsets of $\c$ and the
functions $A$, $B$, $C$, $D$ are entire. These functions satisfy the following fundamental identity
\begin{equation}\label{eqn_ABCD}
A(z)D(z)-B(z)C(z)\equiv 1, \;\;\; z\in \c. 
\end{equation}

 We recall that {\it Pick functions} are defined as functions that are analytic in the upper halfplane
\begin{equation}\label{def_h_plus}
\h^+:=\{ z\in \c \; : \; \im(z)>0\}
\end{equation}
and that satisfy $f(\h^+) \subseteq \h^+$. Let us denote by ${\mathcal P}$ the family of all Pick functions.  
The Nevanlinna parametrization gives a bijection between ${\mathcal P} \cup \{\infty\}$ and
 the class of all solutions to the indeterminate moment problem: 
 $\nu(\d x)$ is a solution to the moment problem \eqref{moment_problem} if and only if  
\begin{equation}\label{Nevanlinna_parametrization}
\int_{\r} \frac{\nu(\d x)}{x-z} = -\frac{A(z) \phi(z) - C(z)}{B(z) \phi(z) - D(z)}, \;\;\;
{\textnormal{ for all }} z \in \h^+,
\end{equation}
where $\phi \in {\mathcal P} \cup \{\infty\}$. We will denote by $\nu(\d x; \phi)$ the measure corresponding to 
a Pick function $\phi$ in \eqref{Nevanlinna_parametrization}. This measure can be recovered from 
\eqref{Nevanlinna_parametrization} via {\it Stieltjes inversion}, which states that for any continuous $f: \r \mapsto \r$ with compact support we have
\begin{equation}\label{Stieltjes_inversion}
\int_{\r} f(x) \nu(\d x;\phi)= \lim\limits_{\epsilon \to 0^+} \frac{1}{\pi} 
\im \int_{\r} I(x+\i \epsilon) f(x) \d x, 
\end{equation}
where we have denoted
$$
I(z):=\int_{\r} \frac{\nu(\d x)}{x-z}, \;\;\; z \in \h^+. 
$$

In many cases the measure $\nu(\d x;\phi)$ in \eqref{Stieltjes_inversion} can be identified explicitly in terms of 
$\phi$ and the functions $B$ and $D$. For example,  assume that for $x\in \r$ we have 
$\phi(x+\i \epsilon)\to \phi(x)$ as $\epsilon \to 0^+$, uniformly in $x$ on compact subsets of $\r$, and also assume that 
$B(x) \phi(x)-D(x)\neq 0$ for $x \in \r$. Then \eqref{eqn_ABCD} and \eqref{Stieltjes_inversion} 
 imply that the measure $\nu(\d x;\phi)$ has a density given by 
\begin{equation}\label{nu_density}
\nu(x;\phi)=\frac{\im(\phi(x))/\pi}{|D(x)-\phi(x) B(x)|^2}, \;\; x\in \r.  
\end{equation}

In general the functions $A$, $B$, $C$, $D$ are not known in closed form, and it seems that there are no known sufficient conditions that ensure that four entire functions $a$, $b$, $c$, $d$ are the Nevanlinna functions 
$A$, $B$, $C$, $D$ for some indeterminate moment problem. There do exist several necessary conditions though: first of all, 
these functions must satisfy  the identity \eqref{eqn_ABCD}. Also, $-A(z)/B(z)$, $-C(z)/D(z)$ and $-D(z)/B(z)$ must be Pick functions (see \cite{Berg1995}). It is also known \cite{Berg1994} that the four functions $A$, $B$, $C$, $D$ are of minimal 
exponential type and have the same order and type (see \cite{Levin} for the definition and properties of the order and type of entire functions).  Several other necessary conditions can be found in \cite{Berg1995}. 

There exist a small number of indeterminate moment problems where the functions $A$, $B$, $C$, $D$, appearing in the Nevanlinna parametrization have been computed explicitly, see
\cite{Chen1998,Ch_2005,ChIs2006,Ismail_2001,IV1998}. Each such example leads to a family of explicit measures 
$\nu(x;\phi)$ that have identical moments. Typically this is done by taking the Pick function 
$\phi$ to be a constant. For example, if $\phi \equiv t \in \r \cup \{\infty\}$ then we obtain a family of discrete measures, which are 
called {\it N-extremal} 
(see \cite{Akhiezer, Berg1995, Shohat} for their properties) and if  
$\phi \equiv t+\i \gamma$, with $t\in \r$ and $\gamma >0$, then we obtain a family of measures having a smooth density 
as in \eqref{nu_density}. The following
interesting question was raised by Mourad Ismail: how one can show directly that all of the measures in
\eqref{nu_density} with $\phi\equiv t+ \i \gamma$ have identical moments? 
Of course one could show this by backtracking the derivation of the Nevanlinna parametrization, but this would not offer any new insight into the problem. One would hope that a direct analytical proof would expose the structure of the Nevanlinna parametrization and tell us something new about it. And this is precisely the main goal of the current paper: under quite general conditions we will show 
that the measures $\nu(x;\phi)$ defined via \eqref{nu_density} have identical moments. Our proof 
is entirely elementary and it uses only the Cauchy Residue Theorem.

This paper is organized as follows: in Section \ref{section_results} we present and prove our main results. In Section
\ref{section_example} we consider an explicit example coming from the paper of Ismail and Valent \cite{IV1998} and we discuss the connections between our construction and the classical Nevanlinna parametrization.

\section{Results}\label{section_results}

Let us first present the notation. The open upper half-plane $\h^+$ is defined via 
\eqref{def_h_plus}, and we denote the open lower half-plane by $\h^-:=\{z\in \c \; : \; \im(z)<0\}$. We will denote the open unit disk by ${\mathbb D}:=\{z\in \c \; : \; |z|<1\}$. Given a function $f : \Omega \to \c$ we will denote 
$\bar f(z):={\overline{f(\bar z)}}$. Note that the function $\bar f$ is defined on 
the domain $\bar \Omega:=\{z\in \c \; : \; \bar z \in \Omega\}$ and $\bar f$ is analytic on 
$\bar \Omega$ if the original function $f$ is analytic 
on $\Omega$.

The following class of entire functions will play an important role. 

\begin{definition}\label{def_M_class}
Let ${\mathcal M}$ be the class of entire functions $f$ such that
\begin{itemize}
 \item[(i)] $f(z)=C\prod_{n\ge 1} (1-z/z_n)$ where $z_n \in \h^+$, $n\ge 1$;
 \item[(iii)] for every $n \ge 0$ it is true that $z^n/f(z) \to 0$, as  $z\to \infty$,  uniformly in $\h^- \cup \r$.  
\end{itemize}
\end{definition}

Now we describe some simple properties of functions in the class ${\mathcal M}$. 

\begin{proposition}\label{prop1}
Assume that $f \in {\mathcal M}$. Let $f(z)=b(z)-\i d(z)$, where $b$ and $d$ are real entire functions. Then 
\begin{itemize}
\item[(i)] $f(z)/\bar f(z) \in {\mathbb D}$ for $z\in \h^+$;
\item[(ii)] $-d(z)/b(z)$ is a Pick function. 
\end{itemize}
\end{proposition}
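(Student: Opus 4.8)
The plan is to exploit the explicit product representation in Definition~\ref{def_M_class}(i) to realize $f/\bar f$ as a Blaschke-type product and then read off both claims from the elementary geometry of the half-planes. First I would write out $\bar f$ from the factorization: since $\bar f(z) = \overline{f(\bar z)}$ and $\overline{1 - \bar z/z_n} = 1 - z/\bar z_n$, we obtain $\bar f(z) = \bar C \prod_{n\ge 1}(1 - z/\bar z_n)$, an entire function whose zeros $\bar z_n$ all lie in $\h^-$. In particular $\bar f$ has no zeros in $\h^+$, so $f/\bar f$ is a well-defined analytic function there (vanishing at the zeros $z_n$ of $f$).

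For part (i) I would rewrite each factor of the quotient as
$$
\frac{1 - z/z_n}{1 - z/\bar z_n} = \frac{\bar z_n}{z_n}\cdot\frac{z - z_n}{z - \bar z_n},
$$
a unimodular constant times a Blaschke factor. The heart of the argument is the elementary identity $|z - w|^2 - |z - \bar w|^2 = -4\,\im(z)\im(w)$, which for $z, w \in \h^+$ yields $|z - z_n| < |z - \bar z_n|$; hence every factor has modulus strictly less than $1$. Since $|C/\bar C| = 1$ as well, every partial product has modulus at most that of its first factor $r_1 < 1$, so the limit satisfies $|f(z)/\bar f(z)| \le r_1 < 1$ for all $z \in \h^+$. (When $z = z_n$ the quotient equals $0 \in {\mathbb D}$, so the conclusion holds trivially there.) This proves (i).

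For part (ii) I would set $w := f(z)/\bar f(z)$ and note that $b = (f + \bar f)/2$ and $d = (\bar f - f)/(2\i)$, so a short computation gives $-d/b = \i(1-w)/(1+w)$. By part (i) we have $|w|<1$, hence $1 + w \neq 0$ and $b$ is non-vanishing on $\h^+$, so $-d/b$ is genuinely analytic there. The Cayley transform $w \mapsto (1-w)/(1+w)$ sends ${\mathbb D}$ into the right half-plane, because $\re\big((1-w)/(1+w)\big) = (1 - |w|^2)/|1+w|^2 > 0$; multiplying by $\i$ rotates this into $\h^+$. Thus $-d/b$ maps $\h^+$ into $\h^+$ and is analytic there, which is exactly the definition of a Pick function.

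The argument is short once the Blaschke structure is recognized, so there is no single serious obstacle; the points requiring genuine care are the strictness of the inequality $|f/\bar f| < 1$ (which I handle through the monotonicity of the partial products, rather than a limit that might only deliver $\le 1$) and the verification that $b$ does not vanish on $\h^+$, which is what upgrades ``$-d/b$ maps into $\h^+$'' to ``$-d/b$ is a Pick function.'' Note that only hypothesis (i) of Definition~\ref{def_M_class} is used in this proposition; the growth condition (iii) plays no role here.
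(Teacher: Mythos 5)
Your proof is correct and follows essentially the same route as the paper: part (i) via the factor-by-factor comparison $|z-z_n|<|z-\bar z_n|$ for $z,z_n\in\h^+$, and part (ii) via the identity $-d/b=\i(1-f/\bar f)/(1+f/\bar f)$ and the Cayley transform mapping ${\mathbb D}$ into $\h^+$. The only difference is that you supply details the paper leaves implicit (the explicit identity behind the geometric inequality, the strictness of $|f/\bar f|<1$ for the infinite product, and the non-vanishing of $b$ on $\h^+$), which is a welcome tightening rather than a different argument.
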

\begin{proof}
This result can be found in Chapter 27 in \cite{Levin}, however the proof is very short and simple and we reproduce it here for convenience of the reader. The first statement follows from property (i) in Definition \ref{def_M_class}: we only need to check that 
$$
\Big | \frac{1-z/z_n}{1-z/\bar z_n} \Big|=\frac{|z-z_n|}{|z-\bar z_n|}<1
$$
for $z,z_n\in \h^+$, and this latter fact is obvious from geometric considerations. To prove the second statement, we write $\bar f(z)=b(z)+\i d(z)$ and we check that
$$
-\frac{d(z)}{b(z)}=\i \frac{1-f(z)/\bar f(z)}{1+f(z)/\bar f(z)}.
$$
Assume $z\in \h^+$. Using the following two facts: (i) the linear fractional 
transformation $\i (1-w)/(1+w)$ maps ${\mathbb D}$ onto $\h^+$, and  (ii) $f(z)/\bar f(z) \in {\mathbb D}$,
we conclude that  $-d(z)/b(z) \in \h^+$, thus $-d(z)/b(z)$ is a Pick function. 
\end{proof}

\begin{remark}
All functions  $f\in {\mathcal M}$ also belong to the class $P$ of entire functions, as defined on page 217 in Levin's book \cite{Levin}. This can be seen from item (i) of Proposition \ref{prop1} and Corollary 3 on page 218 in \cite{Levin}.
\end{remark}

\begin{definition}\label{def_Pick_subclass}
Let ${\mathcal P}_b$ denote the class of Pick functions $\phi$ that satisfy
\begin{itemize}
\item[(i)] $\lim_{\epsilon \to 0^+} \phi(x+\i \epsilon)=:\phi(x)$, uniformly in $x$ on compact subsets of $\r$;
\item[(ii)] the closure of $\phi(\h^+)$ is a bounded subset of $\h^+$.  
\end{itemize} 
\end{definition}

 As we establish in the next result, the class ${\mathcal P}_b$ is quite large. 

\begin{proposition}
The class ${\mathcal P}_b$ is dense in the class of all Pick functions ${\mathcal P}$. 
\end{proposition}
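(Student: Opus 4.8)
The plan is to prove density in the topology of uniform convergence on compact subsets of $\h^+$, which is the natural topology on spaces of analytic functions in this setting. Starting from an arbitrary $\phi \in {\mathcal P}$, I would modify it in two independent stages so as to force the two defining properties of ${\mathcal P}_b$ while perturbing $\phi$ by an arbitrarily small amount. The two properties pull in opposite directions and must be treated separately: property (i) of Definition \ref{def_Pick_subclass} is a boundary-regularity condition that a generic Pick function (whose radial limits exist only almost everywhere) need not satisfy, while property (ii) forces the image to be a relatively compact subset of the \emph{open} half-plane $\h^+$, which the identity map $\phi(z)=z$ violates completely.

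To enforce (i), I would first replace $\phi$ by a small vertical shift $\phi(z+\i\delta)$, $\delta>0$. Since $z\mapsto \phi(z+\i\delta)$ is analytic on $\{\im(z)>-\delta\}\supset\overline{\h^+}$ and maps $\h^+$ into $\h^+$, it is again a Pick function, and it extends analytically across $\r$; in particular its boundary values exist and are approached uniformly on compact subsets of $\r$, so (i) holds for free. To enforce (ii), I would compose with a Möbius self-map of $\h^+$ that compresses $\h^+$ into a relatively compact subset of itself. Using the Cayley transform $C(w)=(w-\i)/(w+\i)$, which maps $\h^+$ biholomorphically onto ${\mathbb D}$, together with its inverse $C^{-1}(\zeta)=\i(1+\zeta)/(1-\zeta)$, I define for $0<r<1$
\begin{equation*}
\psi_r(w):=C^{-1}\big(r\,C(w)\big)=\i\,\frac{1+rC(w)}{1-rC(w)}.
\end{equation*}
Then $\psi_r$ is a Pick function with $\psi_r(\h^+)=C^{-1}(r{\mathbb D})$, whose closure $C^{-1}(\overline{r{\mathbb D}})$ is a compact subset of $\h^+$ because $\overline{r{\mathbb D}}$ is a compact subset of ${\mathbb D}$; moreover the only pole of $\psi_r$ lies at $-\i(1+r)/(1-r)\in\h^-$, so $\psi_r$ is analytic on a neighborhood of $\overline{\h^+}$, and $\psi_r\to\mathrm{id}$ uniformly on compact subsets of $\h^+$ as $r\to1^-$ since $rC(w)\to C(w)$ there.

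Combining the two steps, for a given $\phi\in{\mathcal P}$ I would set $\phi_n(z):=\psi_{r_n}\big(\phi(z+\i\delta_n)\big)$ along sequences $r_n\to1^-$ and $\delta_n\to0^+$. Each $\phi_n$ is a composition of Pick functions, hence Pick; its image lies in $\psi_{r_n}(\h^+)$, so property (ii) holds; and since $\phi(\,\cdot\,+\i\delta_n)$ sends $\overline{\h^+}$ into $\h^+$ while the pole of $\psi_{r_n}$ sits in $\h^-$, the composition $\phi_n$ is analytic across $\r$, giving property (i). Thus $\phi_n\in{\mathcal P}_b$. For convergence I would write $\phi_n(z)-\phi(z)$ as the sum $[\psi_{r_n}(\phi(z+\i\delta_n))-\phi(z+\i\delta_n)]+[\phi(z+\i\delta_n)-\phi(z)]$; on a compact $K\subset\h^+$ the points $\phi(z+\i\delta_n)$ stay in a fixed compact subset of $\h^+$ (on which $\psi_{r_n}\to\mathrm{id}$ uniformly), which kills the first bracket, while continuity of $\phi$ kills the second, so $\phi_n\to\phi$ uniformly on $K$.

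The hard part is precisely the tension between (i) and (ii): compressing the image into a relatively compact subset of $\h^+$ is exactly the operation that moves a function away from the identity, whereas the boundary regularity (i) is delicate for a generic Pick function. The two-parameter construction resolves this by assigning each property to its own small perturbation—the shift $\i\delta_n$ produces analyticity across $\r$ and the Cayley compression $\psi_{r_n}$ furnishes the bounded image—while the diagonal choice $r_n\to1$, $\delta_n\to0$ keeps both perturbations asymptotically trivial in the compact-open topology. The one point requiring genuine care is verifying that $\overline{\psi_{r_n}(\h^+)}$ stays strictly inside $\h^+$ and away from $\r$, which is guaranteed by $\overline{r_n{\mathbb D}}$ being a compact subset of the open disk ${\mathbb D}$.
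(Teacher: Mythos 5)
Your proof is correct and takes essentially the same route as the paper: the paper also forms $g_{\delta}(\phi(z+\i\delta))$, where $g_{\delta}$ is a M\"obius self-map of $\h^+$ compressing $\h^+\cup\r$ into a disk strictly inside $\h^+$ and tending to the identity, exactly paralleling your Cayley compression $\psi_{r}$ combined with the vertical shift. The only cosmetic difference is that you decouple the two perturbations into independent parameters $r_n$ and $\delta_n$, while the paper ties them to a single $\delta$.
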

\begin{proof}
We need to prove that for any $\phi\in {\mathcal P}$, any $\epsilon>0$ and any compact $\Omega \subset \h^+$ there exists $\tilde \phi \in {\mathcal P}_b$ such that $|\phi(z)-\tilde \phi(z)|<\epsilon$ for all $z\in \Omega$. 

First, we check that 
for any $\delta>0$ the linear fractional function 
$$
g_{\delta}(z)=\frac{-1}{\i \delta - 1/(z+\i \delta)}
$$
maps $\h^+ \cup \r$ onto a disk inside $\h^+$, thus $g_{\delta}(z) \in {\mathcal P}_b$. 
It is easy to see that $g_{\delta}(z) \to z$ as $\delta \to 0^+$, uniformly on compact subsets of $\h^+$.
Given $\phi \in {\mathcal P}$ and any $\delta>0$, the function $z\mapsto \phi(z+\i \delta)$ is also in ${\mathcal P}$ and is satisfies condition (i). 
Thus, all functions $z\mapsto g_{\delta}(\phi(z+\i \delta))$ belong to ${\mathcal P}_b$ (for all $\delta>0$),
and these functions converge to $\phi(z)$ as 
$\delta \to 0^+$, uniformly in $z$ on compact subsets of $\h^+$. 
\end{proof}

Next, for $f\in {\mathcal M}$ and $\phi \in {\mathcal P}_b$ we define (as in \eqref{nu_density})
\begin{equation}\label{def_mu}
\mu(x;\phi,f):=\frac{\im(\phi(x))/\pi}{|d(x)-\phi(x) b(x)|^2}, \;\;\; x\in \r,
\end{equation}
where $f(x)=b(x)-\i d(x)$ and $b$, $d$, are real entire functions. Note that $\im(\phi(x))>0$ for $x\in \r$ (this follows from
condition (ii) in Definition \ref{def_Pick_subclass}), thus the denominator in \eqref{def_mu} is non-zero and the functions $\mu(x;\phi,f)$ are well-defined. In most cases we will keep $f \in {\mathcal M}$ fixed 
and we will suppress the dependence of $\mu$ on $f$: we will simply write $\mu(x;\phi)$. 

The next theorem is our first main result.
\begin{theorem}\label{theorem1}
Let us fix $f\in {\mathcal M}$. Then for every $\phi \in {\mathcal P}_b$ the moments 
$$
\mu_n=\int_{\r} x^n \mu(x;\phi) \d x, \;\;\; n = 0,1,2,\dots
$$
are finite and they do not depend on $\phi$. 
\end{theorem}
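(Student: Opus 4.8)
The plan is to treat the two claims separately: finiteness follows from a direct lower bound on the denominator of \eqref{def_mu}, while the independence of $\phi$ is the substantive part and will come from a single application of the Cauchy Residue Theorem. For finiteness, I would write $\phi(x)=\phi_R(x)+\i\phi_I(x)$ with $\phi_I>0$ (guaranteed by condition (ii) of Definition \ref{def_Pick_subclass}) and observe that
$$
|d(x)-\phi(x)b(x)|^2=d(x)^2-2\phi_R(x)b(x)d(x)+|\phi(x)|^2b(x)^2
$$
is a positive-definite quadratic form in the real pair $(b(x),d(x))$ with determinant $\phi_I(x)^2>0$. Hence $|d(x)-\phi(x)b(x)|^2\ge\lambda(x)\,(b(x)^2+d(x)^2)=\lambda(x)\,|f(x)|^2$, where $\lambda(x)$ is the smaller eigenvalue of the form. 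Since $\phi\in{\mathcal P}_b$ keeps $|\phi|$ bounded and $\phi_I$ bounded away from $0$ uniformly on $\r$, the factor $\lambda(x)$ stays above a positive constant. Combined with the super-polynomial growth of $|f|$ on $\r$ from property (iii) of Definition \ref{def_M_class}, this makes $|x|^n\mu(x;\phi)$ decay faster than $|x|^{-2}$, so every moment converges.

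For the independence of $\phi$, the key observation is that for real $x$ the density equals $\tfrac1\pi\im\Psi_\phi(x)$, where $\Psi_\phi(z):=1/(b(z)(d(z)-\phi(z)b(z)))$; this is a one-line check using that $b,d$ are real on $\r$ and $\im(d-\bar\phi b)=\phi_I b$. I would then compare two functions $\phi_1,\phi_2\in{\mathcal P}_b$. The crucial feature is that in the difference the factor $1/b$, which carries the real zeros of $b$, cancels:
$$
\Psi_{\phi_1}(z)-\Psi_{\phi_2}(z)=\frac{\phi_1(z)-\phi_2(z)}{(d(z)-\phi_1(z)b(z))(d(z)-\phi_2(z)b(z))}=:H(z).
$$
Consequently $\mu_n(\phi_1)-\mu_n(\phi_2)=\tfrac1\pi\im\int_\r x^nH(x)\,\d x$, and $H$ has no poles at the real zeros of $b$.

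It then remains to show that $\int_\r x^nH(x)\,\d x=0$ by closing the contour in $\h^+$. First I would verify that $d-\phi b$ is zero-free on $\overline{\h^+}$: the factorization
$$
d-\phi b=-\tfrac12(\i+\phi)\,\bar f\,\Bigl[1-\tfrac{\i-\phi}{\i+\phi}\cdot\tfrac{f}{\bar f}\Bigr]
$$
shows the bracket is bounded below, since $|f/\bar f|\le1$ on $\overline{\h^+}$ by Proposition \ref{prop1}(i) while $|(\i-\phi)/(\i+\phi)|\le\rho<1$ for $\phi$ in the compact set $\overline{\phi(\h^+)}\subset\h^+$. The same factorization gives $|d-\phi b|\ge C|\bar f|$ on $\overline{\h^+}$, so $|H(z)|\le C'|\bar f(z)|^{-2}$; and because $|\bar f(z)|=|f(\bar z)|$ grows faster than any power in $\h^+$ (property (iii) applied in $\h^-$), the integrand $z^nH(z)$ decays super-polynomially on the closing semicircle. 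As $H$ is analytic in $\h^+$ and, using condition (i) of Definition \ref{def_Pick_subclass}, continuous up to $\r$, Cauchy's theorem yields $\int_\r x^nH(x)\,\d x=0$, whence $\mu_n(\phi_1)=\mu_n(\phi_2)$.

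I expect the main obstacle to be the two denominator estimates in $\h^+$: proving that $d-\phi b$ never vanishes there and that $|d-\phi b|$ grows like $|\bar f|$. Both reduce to the competition between the contraction $|f/\bar f|\le1$ and the Schwarz-type bound $|(\i-\phi)/(\i+\phi)|<1$, with the uniformity supplied by the boundedness built into ${\mathcal P}_b$. The genuinely clever step, without which the residue argument fails because of the real zeros of $b$, is the cancellation of $1/b$ in the difference $H=\Psi_{\phi_1}-\Psi_{\phi_2}$.
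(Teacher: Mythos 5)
Your proof is correct, and while it runs on the same analytic engine as the paper's, the algebraic decomposition at its heart is genuinely different. The paper passes from $\phi$ to $w=(1+\i\phi)/(1-\i\phi)$ as in \eqref{phi_to_w}, rewrites the density in terms of $w,f,\bar f$, and then invokes the three-term identity \eqref{main_identity} to split $\mu(x;\phi)$ into $\mu(x;\i)$ plus a term of the form $\tfrac{2}{\pi}\re$ of the boundary value of a function analytic in $\h^+$; the vanishing of its moments is then obtained by the same contour-pushing you use. You instead write $\mu(x;\phi)=\tfrac1\pi\im\bigl(1/(b(d-\phi b))\bigr)$ and compare two arbitrary elements of ${\mathcal P}_b$, exploiting the cancellation of the factor $1/b$ (and hence of the spurious real poles at the zeros of $b$) in the difference $\Psi_{\phi_1}-\Psi_{\phi_2}$ --- a step the paper never needs because its reference function $1/(f\bar f)$ has no real poles at all. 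Note that your two denominator estimates are exactly the paper's in disguise: $(\i-\phi)/(\i+\phi)$ equals the paper's $w$, so your bracket $1-\tfrac{\i-\phi}{\i+\phi}\cdot\tfrac{f}{\bar f}$ is the paper's $1-wf/\bar f$, bounded below by $1-\delta$ via Proposition \ref{prop1}(i) and the boundedness built into Definition \ref{def_Pick_subclass}(ii). What the paper's route buys is reusability: the identity \eqref{main_identity} anchors everything at $\phi\equiv\i$ and directly yields the Cauchy transform of $\mu(x;\phi)\d x$, which is then recycled wholesale in the proof of Theorem \ref{theorem2}; your pairwise-difference argument proves Theorem \ref{theorem1} cleanly but would need to be reorganized to produce the parametrization \eqref{Nevanlinna_parametrization_2}. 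Your finiteness argument (lower-bounding the quadratic form $|d-\phi b|^2$ by its smallest eigenvalue times $|f|^2$) is also a valid alternative to the paper's bound $\mu(x;\phi)\le(1-\delta)^{-2}\mu(x;\i)$.
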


\vspace{0.25cm}
\noindent
{\bf Proof of Theorem \ref{theorem1}:}
Let us denote 
\begin{equation}\label{phi_to_w}
w(z)=\frac{1+\i \phi(z)}{1-\i \phi(z)}, \;\;\; z\in \h^+ \cup \r. 
\end{equation}
The linear fractional transformation $(1+\i z)/(1-\i z)$ maps the upper half-plane $\h^+$ onto the unit disk ${\mathbb D}$. Moreover, condition (ii) of Definition \ref{def_Pick_subclass} implies that
$w(\h^+ \cup \r)$ is a subset of ${\mathbb D}$, thus there exists $\delta \in (0,1)$ such that $|w(z)|<\delta$ for all $z \in \h^+ \cup \r$. 

One can check that \eqref{phi_to_w} implies 
\begin{equation}
\phi(z)=\i \frac{1-w(z)}{1+w(z)}, \;\;\; z \in \h^+ \cup \r. 
\end{equation} 
We plug this expression into \eqref{def_mu} and after some simplifications we rewrite \eqref{def_mu} in the equivalent form   
\begin{equation}\label{eqn_mu_in_terms_of_w}
\mu(x;\phi)=\frac{(1-|w(x)|^2)/\pi}{|w(x)f(x)-\bar f(x)|^2}, \;\;\; x\in \r. 
\end{equation}
As we will see later, this expression for $\mu(x;\phi)$ is more convenient to work with, compared to \eqref{def_mu}. 

Let us check first that all moments  of the measure $\mu(x;\phi) \d x$  are  finite. When $\phi(z) \equiv \i$ we get $w(z) \equiv 0$ from 
\eqref{phi_to_w}, thus   
$$
\mu(x;\i)=\frac{1}{\pi} |f(x)|^{-2}, \;\;\; x\in \r. 
$$
Condition (i) of Definition \ref{def_M_class} implies that $ f$ is zero-free on $\r$, and condition (ii) guarantees that
$f(x)$ grows faster than any power  of $x$ (as $x\to \infty$), thus  all moments of $\mu(x,\i)\d x$ are finite.
 In the general case $\phi \in {\mathcal P}_b$,  we have the two estimates
 $|w(z)|<\delta<1$ and $|f(z)/\bar f(z)|\le 1$ for any $z\in \h^+ \cup \r$, which were proved above, and these estimates give us   
\begin{align}\label{mu_estimate}
\mu(z;\phi)&=|\bar f(z)|^{-2} \frac{(1-|w(z)|^2)/\pi}{|1-w(z) f(z)/\bar f(z)|^2}\le |\bar f(z)|^{-2} \frac{1/\pi}{(1-\delta)^2}, 
\;\;\; z\in \h^+ \cup \r.
\end{align}
Thus, for $x\in \r$ we have $\mu(x;\phi)<(1-\delta)^{-2} \mu(x;\i)$, which shows that all measures $\mu(x,\phi)\d x$ have finite moments. 

Now we come to the main step in the proof of Theorem \ref{theorem1}. The main tool is the following identity
\begin{equation}\label{main_identity}
\frac{1-|w(x)|^2}{|w(x)f(x)-\bar f(x)|^2}=\frac{1}{f(x)\bar f(x)}
+\frac{w(x)}{\bar f(x)(\bar f(x)-w(x)f(x))}+\frac{\bar w(x)}{f(x)(f(x)-\bar w(x) \bar f(x))},
\;\;\; x\in \r,
\end{equation}
which we leave to the reader to verify. Using this identity and formula \eqref{eqn_mu_in_terms_of_w} we obtain
\begin{equation}\label{proof1}
\int_{\r} x^{n} \mu(x;\phi) \d x= \int_{\r} x^{n} \mu(x;\i) \d x
+\frac{2}{\pi} \re I_n,
\end{equation}
where we have denoted 
\begin{equation}\label{def_I_n}
I_n:=\int_{\r} \frac{x^n w(x) \d x}{\bar f(x)(\bar f(x)-w(x) f(x))}.
\end{equation}
Our goal is to prove that $I_n=0$. Informally, this is true because the integrand is analytic in $\h^+$ and when we move the contour of integration to $+\i \infty$ the integrand converges to zero very fast. Let us now present all the details. For $r>0$ we define the contour of integration 
\begin{equation}\label{def_L_R}
L_r:=(-\infty, -r] \cup S_r \cup [r,\infty),
\end{equation}
where 
$S_r:=\{r e^{\i \theta} \; : \; 0\le \pi \le \theta\}$ is a semicircle of radius $r$.  
This is a deformed real line, where the origin is traversed from above via $S_r$. The 
direction is from $-\infty$ to $+\infty$, so that the semicircle  $S_r$ is traversed clockwise.

Note that the integrand  in \eqref{def_I_n} is analytic in $\h^+$ and continuous on $\h^+ \cup \r$. This follows from the estimate $|1-w(z) f(z)/\bar f(z)|>1-\delta$ for $z\in \h^+ \cup \r$ and the fact that 
$\bar f$ does not have zeros in $\h^+ \cup \r$. Thus we can deform the contour of integration in \eqref{def_I_n} and obtain
\begin{align}
\label{eqn_I_n1}
I_n&=\int_{-\infty}^{-r} \frac{x^n w(x) \d x}{\bar f(x)(\bar f(x)-w(x) f(x))}
+\int_{S_r} \frac{z^n w(z) \d z}{\bar f(z)(\bar f(z)-w(z) f(z))}\\
\nonumber
&+\int_{r}^{\infty} \frac{x^n w(x) \d x}{\bar f(x)(\bar f(x)-w(x) f(x))}=:I^{(1)}_n(r)+I^{(2)}_n(r)+I^{(3)}_n(r). 
\end{align}
It is clear that $I^{(1)}_n(r)$ and $I^{(3)}_n(r)$ converge to zero as $r \to +\infty$ (since the integrand is integrable over $\r$). Using the estimate $|1-w(z) f(z)/\bar f(z)|>1-\delta$ for $z\in \h^+$, we can bound $I^{(2)}_n(r)$ as follows
\begin{align*}
|I^{(2)}_n(r)|\le
\int_{S_r} \frac{r^n |\d z|}{|\bar f(z)|^2 |1-w(z) f(z)/\bar f(z)|}<
 \frac{r^{n+1}}{1-\delta}  \int_0^{\pi} \frac{ \d \theta}
{|\bar f(re^{\i \theta})|^2}, 
\end{align*}
and the quantity in the right-hand side converges to zero as $r\to +\infty$, due to condition (ii) of Definition \ref{def_M_class}.
Thus we have proved that $I_n=0$, and formula \eqref{proof1} implies that all measures $\mu(x;\phi)\d x$ (with $\phi \in {\mathcal P}_b$) have identical moments. 
\qed

 Let us now fix $f \in {\mathcal M}$ and let us denote 
\begin{equation}
\label{def_g}
g(z):=- f(z)  \int_{\r} \frac{\mu(x;\i) \d x}{x-z}
=-\frac{f(z)}{\pi}  \int_{\r} \frac{\d x}{f(x) \bar f(x)(x-z)},  \;\;\; z\in \h^-. 
\end{equation}
We recall that we denoted by $b$ and $d$ the real entire functions such that $f(z)=b(z)-\i d(z)$.
The next theorem is our second main result: it establishes a Nevanlinna-type parametrization for the family of measures 
$\mu(x;\phi)\d x$, $\phi \in {\mathcal P}_b$.

\begin{theorem}\label{theorem2}
Let us fix $f\in {\mathcal M}$ and define $g$ as in \eqref{def_g}. 
\begin{itemize}
\item[(i)] The function $g$ can be analytically continued to an entire function.
\item[(ii)] Let $g(z)=a(z)-\i c(z)$ where $a$ and $c$ are real entire functions. Then the following identity is true
\begin{equation}\label{adbc_identity}
a(z)d(z)-b(z)c(z)=1, \;\;\; z\in \c.
\end{equation}
\item[(iii)] For all $\phi \in {\mathcal P}_b$ we have 
\begin{equation}\label{Nevanlinna_parametrization_2}
\int_{\r} \frac{\mu(x;\phi)\d x}{x-z} = -\frac{ a(z)\phi(z)-c(z)}{ b(z)\phi(z)-d(z)},\;\;\; z \in \h^+. 
\end{equation}
\end{itemize}
\end{theorem}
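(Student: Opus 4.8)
The plan is to prove the three parts in the order (i), (ii), (iii), extracting from part (i) an explicit formula for $g$ on $\h^+$ that reduces (ii) to a one-line computation and then drives (iii).

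For part (i) I would write $I(z):=\int_{\r}\mu(x;\i)\d x/(x-z)$, so that $g=-fI$ on $\h^-$ by \eqref{def_g}. Since $\mu(x;\i)=\tfrac1\pi|f(x)|^{-2}=\tfrac{1}{\pi f(x)\bar f(x)}$ is a smooth, rapidly decaying density (conditions (i)--(ii) of Definition \ref{def_M_class}), the Sokhotski--Plemelj formula gives the boundary jump $I(x+\i 0)-I(x-\i 0)=2\i\pi\mu(x;\i)=2\i/(f(x)\bar f(x))$ for $x\in\r$, and hence the jump of $-fI$ across $\r$ equals $-2\i/\bar f(x)$. Because all zeros of $\bar f$ lie in $\h^-$, the function $2\i/\bar f$ is analytic on $\h^+\cup\r$; so defining $g$ on $\h^+$ by $g(z):=-f(z)I(z)+2\i/\bar f(z)$ yields boundary values on $\r$ matching those of $-fI$ from $\h^-$. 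The two half-plane pieces then glue to a function continuous on $\c$ and analytic off $\r$, hence entire by Morera's theorem. This would prove (i) and record the representation
\[
I(z)=\frac{2\i}{f(z)\bar f(z)}-\frac{g(z)}{f(z)},\qquad z\in\h^+.
\]

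For part (ii) I would reflect the definition of $g$ on $\h^-$: since $\mu(x;\i)$ is real, $\overline{I(\bar z)}=I(z)$, so $\bar g(z)=\overline{g(\bar z)}=-\bar f(z)I(z)$ for $z\in\h^+$. Substituting both formulas into $g\bar f-f\bar g$ cancels the $I$-terms and leaves $g\bar f-f\bar g\equiv 2\i$ on $\h^+$, hence on all of $\c$. Expanding with $g=a-\i c$ and $f=b-\i d$ gives $g\bar f-f\bar g=2\i(ad-bc)$, so \eqref{adbc_identity} follows. For part (iii), fix $z\in\h^+$, rewrite $\mu(x;\phi)$ through \eqref{eqn_mu_in_terms_of_w} and the partial-fraction identity \eqref{main_identity}, and integrate against $1/(x-z)$. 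The first term reproduces $I(z)$. In the second term the integrand is analytic in $\h^+\cup\r$ except for the simple pole at $\zeta=z$ (here $\bar f$ is zero-free and $|\bar f-wf|\ge(1-\delta)|\bar f|>0$), so the contour argument of Theorem \ref{theorem1} along $L_r$ now captures the residue at $z$ and gives $2\i w/(\bar f(\bar f-wf))$. In the third term $\bar w$ is analytic in $\h^-$ and $|f-\bar w\bar f|\ge(1-\delta)|f|>0$ there (by Proposition \ref{prop1}(i) and $|\bar w|<\delta$), while the only pole $\zeta=z$ lies in $\h^+$, so closing downward gives zero. Summing and inserting the $\h^+$ formula for $I$ yields
\[
\int_{\r}\frac{\mu(x;\phi)\d x}{x-z}=\frac{2\i-g(\bar f-wf)}{f(\bar f-wf)}.
\]
Converting $w$ back to $\phi$ via \eqref{phi_to_w} gives $b\phi-d=\i(\bar f-wf)/(1+w)$ and $a\phi-c=\i(\bar g-wg)/(1+w)$, so the right-hand side of \eqref{Nevanlinna_parametrization_2} equals $-(\bar g-wg)/(\bar f-wf)$, which matches the display above precisely when $g\bar f-f\bar g=2\i$, i.e. exactly the identity of part (ii).

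The conceptual crux will be part (i): spotting the additive correction $2\i/\bar f$ and checking it is analytic in $\h^+$ because $\bar f$ has all its zeros in $\h^-$ is what turns the jump-discontinuous $-fI$ into an entire function, and the resulting $\h^+$ representation of $I$ is the engine for both (ii) and (iii). The technical crux will be the arc estimates in (iii): the large-semicircle contributions must vanish, which for the second integral uses the growth of $\bar f$ in $\h^+$ (equivalently the growth of $f$ in $\h^-$ via $|\bar f(z)|=|f(\bar z)|$) and for the third the growth of $f$ in $\h^-$, in each case combined with the uniform bounds $|w|<\delta$ and $|\bar w|<\delta$. Finally, I would take care with the sign in the Plemelj jump, since that sign propagates through to the constant $2\i$ and hence to the normalization $ad-bc=1$.
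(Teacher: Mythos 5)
Your proof is correct and follows essentially the same route as the paper: the same key identity \eqref{main_identity} and decomposition into three integrals, with the residue at $x=z$ producing $2\i w/(\bar f(\bar f - wf))$, the third integral vanishing, and the functional equation $g(z)\bar f(z)-f(z)\bar g(z)=2\i$ driving both (ii) and (iii). The only variation is in part (i), where you obtain the continuation $g(z)=-f(z)I(z)+2\i/\bar f(z)$ on $\h^+$ via the Sokhotski--Plemelj jump and Morera's theorem (and handle $J_3$ by closing in $\h^-$ rather than conjugating), whereas the paper derives the identical formula by the elementary contour deformation $\r\mapsto L_r\mapsto\r$ picking up the residue at $u=z$.
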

\begin{proof}
Let us first prove item (i). 
 Since $f$ has no zeros on $\r$, we can find $r>0$ small enough, such that $f(u)\bar f(u)\neq 0$ 
when $|u|<2r$ (this follows from the fact that zeros of non trivial entire functions can not have finite accumulation points). Therefore, we can fix $z \in \h^-$ and deform the contour of integration $\r \mapsto L_r$ in the integral in \eqref{def_g},
where $L_r$ was defined above in \eqref{def_L_R}. Thus we obtain 
\begin{equation}\label{g_deformed_contour}
g(z)= -\frac{f(z)}{\pi} \int_{L_r} \frac{\d u}{f(u)\bar f(u)(u-z)}.
\end{equation}
The above equation gives us an analytic continuation of $g(z)$ into the domain $\h^- \cup \{|z|<r\}$. 
Now we fix any $z$ satisfying $|z|<r$ and $z\in \h^+$ and we deform the contour of integration in 
\eqref{g_deformed_contour} in the opposite direction: $L_r \mapsto \r$. 
Note that now the integrand is not analytic  ``in between" the old contour $L_r$ and the new
contour $\r$: it has a simple pole at $u=z$. Applying the Cauchy Residue Theorem we obtain
\begin{align}\label{H_lower_half_plane}
\nonumber
g(z)&=\frac{f(z)}{\pi} \times 2\pi \i \times {\textnormal{Res}} \Big( 
\frac{1}{f(u)\bar f(u)(u-z)} \; \Big | \; u=z \Big)
-\frac{f(z)}{\pi} \int_{\r} \frac{\d u}{f(u)\bar f(u)(u-z)}\\
&=\frac{2 \i}{\bar f(z)}- f(z)\int_{\r} \frac{\mu(x;\i) \d x}{x-z}=
\frac{2 \i}{\bar f(z)}+ \frac{f(z)}{\bar f(z)} \bar g(z).
\end{align}
In the last step of the above computation we have used the fact that 
\begin{equation}\label{g_hat}
\bar g(z)= -\bar f(z) \int_{\r} \frac{\mu(x;\i) \d x}{x-z}, \;\;\; z\in \h^+. 
\end{equation}
We remind the reader that formula \eqref{H_lower_half_plane}  holds for any $z\in \h^+$, such that $|z|<r$. 
However, the right-hand side in \eqref{H_lower_half_plane} is analytic everywhere in $\h^+$ (since 
$\bar g(z)$ is analytic in $\h^+$ and $\bar f(z)$ is zero-free in $\h^+$). Thus we have obtained an analytic continuation
of $g(z)$ to the domain $\h^- \cup (-r,r) \cup \h^+$. 

The same procedure can be repeated for any point $a \in \r$: instead of deforming the contour of integration so that we bypass 
zero by a semicircle from above, we can bypass $a$ by a small semicircle from above, and obtain an analytic continuation to the domain $\h^- \cup (a-r,a+r) \cup \h^+$ (for some $r$ depending on $a$). Thus 
$g(z)$ is analytic in the entire complex plane and we have proved item (i).

While doing the above calculations, we have also proved item (ii). Indeed, formula \eqref{H_lower_half_plane} implies the identity
\begin{equation}\label{fg_identity}
g(z) \bar f(z)-f(z) \bar g(z)=2\i,
\end{equation}
which is easily seen to be equivalent to \eqref{adbc_identity}. 

It remains to prove item (iii). Again, our main tool is the key identity  \eqref{main_identity}: from this result we obtain for $z\in \h^+$
\begin{align}\label{proof2}
\nonumber
 \int_{\r} \frac{\mu(x;\phi)\d x}{x-z} 
=\int_{\r} \frac{\mu(x;\i) \d x}{x-z}
&+\int_{\r} \frac{w(x)/\pi}{\bar f(x)(\bar f(x)-w(x)f(x))}
\times \frac{\d x}{x-z}\\
&+
 \int_{\r} \frac{\bar w(x)/\pi }{f(x)(f(x)-\bar w(x) \bar f(x))}
 \times \frac{\d x}{x-z}=:J_1+J_2+J_3.
\end{align}
Our first goal is to prove that $J_3=0$. Consider its conjugate
$$
\bar J_3=
\int_{\r} \frac{w(x)/\pi }{\bar f(x)(\bar f(x)- w(x)  f(x))}
 \times \frac{\d x}{x-\bar z}. 
$$
Note that $\bar z \in \h^-$, thus the function $x\mapsto 1/(x-\bar z)$ is analytic in $\h^+$. Now we can repeat verbatim the argument in the proof of Theorem \ref{theorem1}, where we have demonstrated that the integral $I_n$ in \eqref{def_I_n} is identically equal to zero, and we can deduce in exactly the same way that $\bar J_3=0$.

We can also use the same argument to deal with the integral $I_n$ in \eqref{def_I_n} and show that
\begin{equation}\label{eqn_J2}
J_2=\int_{\r} \frac{w(x)/\pi}{\bar f(x)(\bar f(x)-w(x)f(x))}
\times \frac{\d x}{x-z}=\frac{2\i w(z)}{\bar f(z)(\bar f(z)-w f(z))}. 
\end{equation}
Indeed, since $z\in \h^+$, the integrand in \eqref{eqn_J2} has a simple pole at $x=z$, so that when we shift the contour of integration $\r \mapsto L_r$ (as in  \eqref{eqn_I_n1}) and take $r\to +\infty$, we need to take into account the residue at $x=z$, and this residue gives us the right-hand side of \eqref{eqn_J2}.    

Combining \eqref{g_hat}, \eqref{proof2} and \eqref{eqn_J2} and the fact that $J_3=0$ we obtain
\begin{align}\label{proof3}
\nonumber
\int_{\r} \frac{\mu(x;\phi)\d x}{x-z} 
&=\int_{\r} \frac{\mu(x;\i) \d x}{x-z}
+\frac{2\i w(z)}{\bar f(z)(\bar f(z)-w(z) f(z))}\\
&=-\frac{\bar g(z)}{\bar f(z)}+\frac{2\i w(z)}{\bar f(z)(\bar f(z)-w(z) f(z))}
\\
\nonumber
&=-\frac{1}{w(z) f(z)-\bar f(z)} \Big[ 
w(z) \Big(\frac{2\i}{\bar f(z)}+\frac{f(z)}{\bar f(z)} \bar g(z)  \Big)-\bar g(z) 
\Big]
=-\frac{w(z) g(z)-\bar g(z)}{w(z) f(z)-\bar f(z)},
\end{align}
where in the last step we have also used \eqref{H_lower_half_plane}. 
The above formula is equivalent 
to \eqref{Nevanlinna_parametrization_2} after we express $w(z)$ in terms of $\phi(z)$ via 
\eqref{phi_to_w} and substitute $f(z)=b(z)-\i d(z)$ and $g(z)=a(z)-\i c(z)$. 
\end{proof}

\begin{remark}
Given the fact that $g(z)$ is entire, the identity 
\eqref{fg_identity} (and thus \eqref{adbc_identity}) 
can be obtained as a simple consequence of Stieljes inversion
\eqref{Stieltjes_inversion}. Indeed, when $z\in \r$ we use \eqref{g_hat} and obtain 
\begin{align*}
\frac{1}{2\i} \Big[\frac{g(z)}{f(z)}-\frac{\bar g(z)}{\bar f(z)} \Big]&=
-\im \frac{\bar g(z)}{\bar f(z)}=
-\lim_{\epsilon \to 0^+}  \im \frac{\bar g(z+\i \epsilon)}{\bar f(z+\i \epsilon)}\\
&=
\lim_{\epsilon \to 0^+}  \frac{1}{\pi} \im \int_{\r} \frac{1}{f(x)\bar f(x)}   \times \frac{\d x}{x-z-\i \epsilon}
=\frac{1}{f(z)\bar f(z)}.
\end{align*} 
This result can be extended to $z\in \c$ by analytic continuation. 
\end{remark}

\begin{remark}
Formula \eqref{proof3} can be used to give an alternative proof of the fact that 
the measures $\mu(x,\phi)\d x$ (for all $\phi \in {\mathcal P}_b$) have identical moments. Indeed, considering the asymptotic expansion of both sides of \eqref{proof3} as $\im(z) \to +\infty$ we see that 
\begin{equation}\label{asymptotic_eqn}
\sum\limits_{n\ge 0} z^{-n-1} \int_{\r} x^n \mu(x;\phi)\d x 
=\sum\limits_{n\ge 0} z^{-n-1} \int_{\r} x^n \mu(x;\i) \d x
+F(z), \;\;\; z\to +\i \infty, 
\end{equation}
where 
$$
F(z):=-\frac{2\i w(z)}{\bar f(z)^2(1-w(z) f(z)/\bar f(z))}. 
$$
Since $|w(z)|<\delta$ and $|1-w(z) f(z)/\bar f(z)|>|1-\delta|$ for $z\in \h^+$, condition (ii) of Definition \ref{def_M_class}
implies that for every fixed $N\ge 0$ it is true that $F(z)=o(z^{-N})$ as $\im(z) \to +\infty$. 
Thus $F(z)$ is asymptotically smaller than any power of $z$, and the coefficients in front of $z^{-n-1}$ in the asymptotic expansion \eqref{asymptotic_eqn} must coincide, which proves that the  moments of $\mu(x;\phi)\d x$ coincide with those 
of $\mu(x;\i)\d x$.  
\end{remark}

\section{Connections with the Nevanlinna parameterization}\label{section_example}

It is natural to ask
what is the connection between the functions $A$, $B$, $C$, $D$ (which appear in the Nevanlinna parametrization 
\eqref{Nevanlinna_parametrization}) and the functions $a$, $b$, $c$, $d$ (which appear in the 
Nevanlinna-type parametrization \eqref{Nevanlinna_parametrization_2}). 
It turns out that this is a not trivial question. As we will demonstrate in the next example, two situations can arise:

\label{page1}

\vspace{0.25cm}
\noindent
{\bf Case I:} The functions $B$, $D$ are linear combinations of $b$ and $d$.

\vspace{0.15cm}
\noindent
{\bf Case II:}
The functions $B$, $D$ can not be obtained as linear combinations of $b$ and $d$.
 
\vspace{0.25cm}

To demonstrate this, we will consider an indeterminate moment problem studied by Ismail and Valent in \cite{IV1998}. 
Let us fix $k\in (0,1)$ and denote the complete elliptic integral of the first kind by 
$$
K(k):=\frac{\pi}{2} \times  {}_2F_1(\tfrac{1}{2},\tfrac{1}{2};1;k^2).
$$   
We also denote $K:=K(k)$, $k':=\sqrt{1-k^2}$ and $K':=K(k')$.
Take $f(z)=\tfrac{2}{\sqrt{\pi}}\cos(\sqrt{z}(K-\i K')/2)$, so that $f(z)=b(z)-\i d(z)$, where 
\begin{equation}\label{eqn_b_d}
b(z)=\frac{2}{\sqrt{\pi}}\cos(\sqrt{z}K/2)\cosh(\sqrt{z}K'/2), \;\;\; d(z)=-\frac{2}{\sqrt{\pi}}\sin(\sqrt{z}K/2)\sinh(\sqrt{z}K'/2). 
\end{equation}
It is an easy exercise to show that $f\in {\mathcal M}$. 
Consider the function
\begin{equation}\label{mu_IV}
\mu(x;\i)=\frac{1/\pi}{|f(x)|^2}=\frac{1/2}{\cos(\sqrt{z}K)+\cosh(\sqrt{z}K')}. 
\end{equation}
As was shown by Ismail and Valent in \cite{IV1998}, the measure $\mu(x;\i) \d x$ has total mass one 
(see also \cite{Kuznetsov} for an analytical proof of this result) and this measure is a solution to a certain indeterminate moment problem. 
The functions $B(x)$ and $D(x)$, appearing in the Nevanlinna parametrization of this indeterminate moment problem, 
were computed explicitly in \cite{IV1998}[Theorem 4.4], and are given by  
\begin{equation}\label{eqn_B_D}
B(x)=\frac{\sqrt{\pi}}2 b(x)-\frac{1}{\sqrt{\pi}} \ln(k/k') d(x), \;\;\;  
D(x)=\frac{2}{\sqrt{\pi}} d(x).
\end{equation}
We see that $B$, $D$ are linear combinations of $b$, $d$ and we are in Case I. 

Let us now consider the function
$$
\tilde f(z):=f(z)+\i z b(z). 
$$
One can show that $\tilde f \in {\mathcal M}$, thus we obtain a family of measures $\mu(x;\phi,\tilde f)\d x$, $\phi \in {\mathcal P}_b$ with identical moments. Now we ask the same question: What is the Nevanlinna parametrization for this indeterminate moment problem? The answer is that the functions $B$ and $D$ in the Nevanlinna parametrization are the same as in \eqref{eqn_B_D}.
In particular, they can not be expresses as the linear combination of $\tilde b(z)=b(z)$ and 
$\tilde d(z)=d(z)-zb(z)$ (where $\tilde f(z)=\tilde b(z)-\i \tilde d(z)$), and we are in Case II. 

Let us explain how we arrived at this conclusion. Consider a measure $\nu(x;\tilde \phi)\d x$ obtained via \eqref{nu_density}
with the Pick function 
\begin{equation}\label{tilde_phi}
\tilde \phi(z):=\frac{4}{\pi}(z+\i)
\end{equation}
 and $B$ and $D$ as in \eqref{Nevanlinna_parametrization}.  
A simple calculation shows that 
$$
\nu(x;\tilde \phi)=\frac{1/\pi}{|\tilde f(x)|^2}=\mu(x;\i,\tilde f), \;\;\; x\in \r.  
$$  
However, since this measure was constructed through the Nevanlinna parametrization \eqref{Nevanlinna_parametrization}, it must have the same moments as measure \eqref{mu_IV}, thus the functions $B$ and $D$ in the Nevanlinna parametrization must coincide 
with those given in \eqref{eqn_B_D}. 

Thus we have demonstrated that some indeterminate moment problems constructed through \eqref{def_mu} fall into case (I) and
some fall into case (II) (as described on page \pageref{page1}).  
It is an interesting problem to find what properties of a function $f \in {\mathcal M}$ 
would allow one to distinguish between these two cases.

\section*{Acknowledgements}

 The research was supported by the Natural Sciences and Engineering Research Council of Canada. 
  We would like to thank Mourad Ismail for helpful discussions.

%

\end{document}